\newtheorem{theorem}{Theorem}[section]
\newtheorem{lemma}[theorem]{Lemma}
\newtheorem{thm}[theorem]{Theorem}
\newtheorem{predefinition}[theorem]{{\bf Definition}}
\newenvironment{definition}{\begin{predefinition}\rm{\hspace{-0.5 em}{\bf}}}{\end{predefinition}}
\newtheorem{prequestion}[theorem]{{\bf Question}}
\newenvironment{question}{\begin{prequestion}\rm{\hspace{-0.5 em}{\bf}}}{\end{prequestion}}
\newtheorem{preremark}[theorem]{{\bf Remark}}
\newenvironment{remark}{\begin{preremark}\rm{\hspace{-0.5 em}{\bf}}}{\end{preremark}}
\DeclareMathAlphabet{\mathbbmsl}{U}{bbm}{m}{sl}
\DeclareMathAlphabet{\mathpzc}{OT1}{pzc}{m}{it}
\DeclareRobustCommand\widecheck[1]{{\mathpalette\@widecheck{#1}}}
\def\@widecheck#1#2{%
    \setbox\z@\hbox{\m@th$#1#2$}%
    \setbox\tw@\hbox{\m@th$#1%
       \widehat{%
          \vrule\@width\z@\@height\ht\z@
          \vrule\@height\z@\@width\wd\z@}$}%
    \dp\tw@-\ht\z@
    \@tempdima\ht\z@ \advance\@tempdima2\ht\tw@ \divide\@tempdima\thr@@
    \setbox\tw@\hbox{%
       \raise\@tempdima\hbox{\scalebox{1}[-1]{\lower\@tempdima\box
\tw@}}}%
    {\ooalign{\box\tw@ \cr \box\z@}}}
\title{On  $K_{2,t}$-bootstrap percolation\\[4mm]}
\author{
{M.R.  Bidgoli} \quad  {A. Mohammadian}   \quad  {B. Tayfeh-Rezaie}\\[4mm]
School of Mathematics,\\
Institute for Research in Fundamental Sciences\,(IPM),\\
P.O. Box 19395-5746, Tehran, Iran\\[3mm]
\textsf{\{bd, ali\_m, tayfeh-r\}}@\textsf{ipm.ir}\\[8mm]}
\date{}
\begin{document}

\maketitle

\begin{abstract}
Given   two  graphs $G$ and $H$, it is  said  that  $G$  percolates in  $H$-bootstrap process if one could join all the nonadjacent pairs of vertices of $G$ in some order such that a  new copy of $H$ is created at each step. Balogh, Bollob\'as and Morris in 2012  investigated   the   threshold of $H$-bootstrap percolation  in the Erd\H{o}s$\text{\bf--}$R\'enyi model for the complete graph  $H$  and proposed the similar problem for  $H=K_{s,t}$,    the   complete bipartite graph. In this paper, we provide  lower and upper bounds on  the  threshold of $K_{2, t}$-bootstrap percolation. In addition,      a    threshold function is derived for     $K_{2, 4}$-bootstrap percolation. \\[3mm]
\noindent {\bf Keywords:}    Bootstrap percolation,   Random graph, Threshold. \\[1mm]
\noindent {\bf AMS Mathematics Subject Classification\,(2010):}    05C80, 60K35. \\[6mm]
\end{abstract}

\section{Introduction}

Bootstrap percolation  on graphs has been extensively investigated    in several diverse fields such as  combinatorics,  probability theory, statistical physics and social sciences.  Many different  models of   bootstrap percolation  have    been  defined and  studied in the literature    including  the    $r$-neighbor bootstrap percolation and the  majority bootstrap percolation.  In this paper, we deal  with the $H$-bootstrap percolation  whose study   was initiated  in 2012  by Balogh, Bollob\'as and Morris    \cite{bal}.  Roughly speaking, for  two given  graphs $G$ and $H$, we say  that  $G$  percolates in the  $H$-bootstrap process   if it is possible to join   all the nonadjacent pairs of vertices of $G$ in some order such that a  new copy of $H$ is created at each step.  The concept is closely  related  to the notion of  weak saturation  that was    introduced in 1968  by   Bollob\'as   \cite{bol}.  The $H$-bootstrap percolation has been  also  studied  by other researches, see  \cite{ang, bolp, gun, kol}.

Throughout   this  paper, all graphs    are assumed to be finite,   undirected,  and without loops or multiple edges.
For   a graph $G$, we denote  the vertex set and the edge set of   $G$  by $V(G)$ and $E(G)$, respectively. For    given    graphs $G$ and $H$, we  associate  the graph $\widehat{G}_H$ obtained from  the following process:   Let $G_0=G$  and for $i=1, 2, \ldots$  define $G_i$ as the graph with   vertex set $V(G)$ and edge set  $E(G_{i-1})\cup E_i$, where $E_i$ is the set of all edges in  the complement of  $G_{i-1}$ such that adding each of them to $G_{i-1}$ creates a new copy of $H$. Define $\widehat{G}_H$ as the graph with   vertex set $V(G)$ and edge set  $\mathsmaller{\bigcup}_{i\geqslant0}E(G_i)$.   We say that $G$ {\sl percolates in the $H$-bootstrap process} if $\widehat{G}_H$  is a complete graph.

For two positive   real valued functions $f$ and $g$  defined on positive integers, we write $f=\mathrm{O}(g)$ (respectively, $f=\Omega(g)$) if there exists    a  positive constant   $c$ such that $f(n)\leqslant cg(n)$ (respectively, $f(n)\geqslant cg(n)$) for any $n$ large enough. Further, we write  $f=\Theta(g)$ if $f=\mathrm{O}(g)$ and  $f=\Omega(g)$. Finally, we write $f\ll g$   (respectively,  $f\gg g$) if $\lim_{n\rightarrow\infty}\tfrac{f(n)}{g(n)}$ equals $0$   (respectively, $\infty$).

For a positive  integer $n$ and a function $p$   defined on positive integers with values  in   $[0, 1]$, we denote by $\mathbbmsl{G}(n, p)$  the
probability space of all graphs on a fixed vertex set of size $n$ where every two  distinct    vertices  are adjacent   independently with probability $p(n)$.
In the literature,  $\mathbbmsl{G}(n, p)$ is known as   the Erd\H{o}s$\text{\bf--}$R\'enyi model for random graphs.
A function $\widecheck{p}$ is a {\sl threshold} for   a sequence   $\mathscr{E}_n$    of    events   in $\mathbbmsl{G}(n, p)$    if
$$\mathlarger{\lim}_{n\rightarrow\infty}\mathbbmsl{P}(\mathscr{E}_n)=
\left\{\begin{array}{ll}
0,  &   \mbox{ if } p\ll \widecheck{p}\mbox{;}\\
\vspace{-1mm}\\
1, &  \mbox{ if } p\gg \widecheck{p}\mbox{.}
\end{array}\right.$$
We say  that  $\mathscr{E}_n$ holds {\sl with high probability}  if $\lim_{n\rightarrow\infty}\mathbbmsl{P}(\mathscr{E}_n)=1$.
By    a result of    Bollob\'as  and     Thomason \cite{bolt}, for any  graph $H$,   $$p_c(n; H)=\inf\Big\{p\in[0, 1] \, \Big| \, \mathbbmsl{P}\big(\text{$\mathbbmsl{G}(n, p)$  percolates in $H$-bootstrap process}\big)\geqslant\tfrac{1}{2}\Big\}$$ is  a threshold function for  $H$-bootstrap percolation.

Denote the  complete graph on $r$ vertices and  the complete bipartite graph with   part  sizes $s$ and $t$ by $K_r$ and $K_{s, t}$, respectively. Balogh, Bollob\'as and Morris in  \cite{bal}  studied     $H$-bootstrap percolation on $\mathbbmsl{G}(n, p)$. They  proved
for any   fixed integer  $r\geqslant4$ and  any   sufficiently large $n$ that
$$\frac{n^{-\lambda}}{2\text{\sl e}\log n}\leqslant p_c(n; K_r)\leqslant n^{-\lambda}\log n,$$ where $\lambda=\tfrac{2r-4}{r^2-r-4}$.
One of the   open   problems   posed in that  paper is   the  determination of   $p_c(n; K_{s,t})$. We know that
$$p_c(n; K_{1,t})=\Theta\hspace{-1mm}\left(n^{-\tfrac{t}{t-1}}\right) \mbox{  for any }  t\geqslant2$$ and  $$p_c(n; K_{2,2})=p_c(n; K_{2,3})=\frac{\log n}{n}+\Theta\hspace{-1mm}\left (\frac{1}{n}\right),$$   according to some  results in    \cite{bal}.  In this   paper, we examine  $p_c(n; K_{2,t})$ for    $t\geqslant4$. We present lower and upper bounds on $p_c(n; K_{2,t})$ and moreover, we prove that $p_c(n; K_{2,4})=\Theta(n^{-10/13})$.

Let us fix some notation and terminology.   For a  graph $G$ and a   subset   $S$ of $V(G)$, we denote the induced subgraph of $G$ on $S$  by $G[S]$. For a vertex     $v\in V(G)$, we set   $N_G(v)=\{x\in V(G) \, | \, v \text{   is adjacent to } x\}$  and    $N_G[v]=N_G(v)\cup\{v\}$. The {\sl degree} of  a vertex  $v\in V(G)$,  denoted by $\deg_G(v)$,  is defined  as $|N_G(v)|$.  A graph  $G$   is a {\sl complete split graph} if one  can partition $V(G)$   into an independent   set $I$ and a clique $C$ such that each  vertex in $I$  is adjacent to each  vertex in $C$.

\section{The  upper bound}

In this  section, we assume that  $t$ is an integer at least $4$ and  we reserve    $\widehat{G}$ for  the graph obtained from a graph $G$ in     $K_{2,t}$-bootstrap process. We will  obtain  an upper bound on  $p_c(n; K_{2,t})$. More precisely, we will   establish   that
$$p_c(n; K_{2,t})=\mathrm{O}\hspace{-1mm}\left(n^{-\tfrac{1}{\eta(t)}}\right),$$ where
$$\eta(t)=\left\{\begin{array}{ll}
\mathlarger{\frac{6t^2-14t+12}{3t^2-4t+8}},  &   \mbox{ if $t$ is even;}\\
\vspace{-1mm}\\
\mathlarger{\frac{2t^2-4t+2}{t^2-t+2}}, &  \mbox{ if $t$ is odd.}
\end{array}\right.$$
Recall that the {\sl density} of a graph  $G$ is defined as $$d(G)=\frac{|E(G)|}{|V(G)|},$$ and the    {\sl maximum subgraph density} of $G$ as
$$m(G)=\max\Big\{d(H) \, \Big| \,  H  \mbox{ is a subgraph of }  G\Big\}.$$
In our proofs, we frequently use  the  following theorem.

\begin{thm}\label{bolo}
{\rm (Bollob\'as \cite{bo})}
Let $H$ be a fixed graph with at least one edge. Then $n^{-1/m(H)}$ is a threshold for the property that $\mathbbmsl{G}(n, p)$
contains a copy of $H$ as a subgraph.
\end{thm}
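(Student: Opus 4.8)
\ The plan is to run the first and second moment methods on the random variable $X$ counting the labelled copies of $H$ in $\mathbbmsl{G}(n, p)$. Write $v=|V(H)|$ and $e=|E(H)|$, so that $\mathbbmsl{E}[X]=\Theta(n^{v}p^{e})$, where here and throughout the implied constants depend only on $H$.

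First consider the case $p\ll n^{-1/m(H)}$, where one wants that with high probability $\mathbbmsl{G}(n,p)$ has no copy of $H$. A first moment bound on $X$ itself does not suffice, since $H$ may contain a subgraph strictly denser than $H$. Instead I would fix a subgraph $F\subseteq H$ with $d(F)=m(H)$ and count copies of $F$ only: since $p\ll n^{-1/m(H)}=n^{-|V(F)|/|E(F)|}$, the expected number of copies of $F$ is $\Theta\!\left(n^{|V(F)|}p^{|E(F)|}\right)=\Theta\!\left((n\,p^{m(H)})^{|V(F)|}\right)\to 0$, so Markov's inequality gives that $\mathbbmsl{G}(n,p)$ contains no copy of $F$, hence no copy of $H$, with high probability.

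Now consider $p\gg n^{-1/m(H)}$. Here $\mathbbmsl{E}[X]=\Theta(n^{v}p^{e})\to\infty$, and I would control the variance via $\mathbbmsl{E}[X^{2}]=\sum\mathbbmsl{P}(\Phi_{1}\text{ and }\Phi_{2}\text{ both present})$, the sum being over ordered pairs of copies of $H$ in $K_{n}$ grouped according to the isomorphism type of the overlap $\Phi_{1}\cap\Phi_{2}$. Pairs whose overlap has no edge contribute $(1+o(1))\,\mathbbmsl{E}[X]^{2}$; a pair whose overlap is a subgraph $F\subseteq H$ with $|E(F)|\geqslant1$ contributes $\Theta\!\left(n^{2v-|V(F)|}p^{2e-|E(F)|}\right)$, which is $o(\mathbbmsl{E}[X]^{2})$ exactly when $n^{|V(F)|}p^{|E(F)|}\to\infty$. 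Since $F\subseteq H$ forces $|E(F)|/|V(F)|=d(F)\leqslant m(H)$, the hypothesis $p\gg n^{-1/m(H)}$ yields $n^{|V(F)|}p^{|E(F)|}=\left(n\,p^{|E(F)|/|V(F)|}\right)^{|V(F)|}\to\infty$. As there are only finitely many overlap types, summing shows $\mathrm{Var}(X)=o(\mathbbmsl{E}[X]^{2})$, and Chebyshev's inequality gives $X\geqslant1$ with high probability.

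The only real technical point is the bookkeeping in the second moment: one must enumerate the possible overlap graphs $F$ (finitely many, each a subgraph of $H$), count the ordered pairs of copies of $H$ in $K_{n}$ realising each $F$, and check the exponent inequality uniformly — all while keeping the edgeless-overlap terms separated out so that they reconstruct $\mathbbmsl{E}[X]^{2}$ rather than being absorbed into the error term.
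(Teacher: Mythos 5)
The paper does not prove this statement---it is quoted as a classical result of Bollob\'as (1981) and used as a black box. Your argument is the standard first-moment/second-moment proof of the subgraph containment threshold, and it is essentially correct. Two small remarks for precision: in the $p\ll n^{-1/m(H)}$ regime the key point you correctly identify is to apply Markov to the \emph{densest} subgraph $F$ rather than to $H$ itself; and in the second-moment bookkeeping it is cleaner to observe that pairs with edgeless overlap contribute nothing to $\mathrm{Var}(X)=\mathbbmsl{E}[X^{2}]-\mathbbmsl{E}[X]^{2}$ (their $p^{2e}$ terms cancel exactly), so the variance is bounded by the finite sum over overlap types $F$ with $|E(F)|\geqslant1$ of $\Theta\!\left(n^{2v-|V(F)|}p^{2e-|E(F)|}\right)$, each of which you correctly show is $o(\mathbbmsl{E}[X]^{2})$ since $d(F)\leqslant m(H)$ forces $n^{|V(F)|}p^{|E(F)|}\to\infty$.
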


The following lemma is easily   obtained from  the  definition of  $K_{2,t}$-bootstrap process.

\begin{lemma}\label{nei}
Let $G$ be a graph and let $x,y \in V(G)$ with $|N_G(x) \cap N_G(y)| \geqslant t-1$. Then $N_{\widehat{G}}(x)\setminus\{y\}=N_{\widehat{G}}(y)\setminus\{x\}$.
\end{lemma}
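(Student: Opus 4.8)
The plan is to show that if $x$ and $y$ share at least $t-1$ common neighbors in $G$, then in the $K_{2,t}$-bootstrap process every vertex adjacent to one of $x,y$ becomes adjacent to the other. First I would fix a set $W \subseteq N_G(x) \cap N_G(y)$ with $|W| = t-1$; this $W$ plays the role of $t-1$ of the vertices on the size-$t$ side of a copy of $K_{2,t}$, while $\{x,y\}$ is the size-$2$ side. The key observation is: for any vertex $z \notin \{x,y\} \cup W$, if $z$ is adjacent to exactly one of $x,y$ in the current graph $G_i$ but not the other, then adding the missing edge creates a new copy of $K_{2,t}$, namely the one with parts $\{x,y\}$ and $W \cup \{z\}$ — this is a genuine $K_{2,t}$ because $x,y$ are each joined to all of $W$ and (after the addition) to $z$, and it is new because the edge just added did not exist before. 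Hence that missing edge is added in the process.

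Next I would run this argument to its conclusion. Whenever, at some stage $G_i$, there is a vertex $z$ adjacent in $G_i$ to $x$ but not to $y$ (or vice versa), with $z \notin W \cup \{x,y\}$, the edge $yz$ (resp. $xz$) gets added at step $i+1$ (or has already been added). So in $\widehat G$ we cannot have any vertex $z \notin W \cup \{x,y\}$ adjacent to exactly one of $x,y$: the symmetric difference $N_{\widehat G}(x) \,\triangle\, N_{\widehat G}(y)$ contains no such $z$. Since every vertex of $W$ is adjacent to both $x$ and $y$ already in $G$, hence in $\widehat G$, the only possible members of $N_{\widehat G}(x) \triangle N_{\widehat G}(y)$ are $x$ and $y$ themselves. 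This gives exactly $N_{\widehat G}(x) \setminus \{y\} = N_{\widehat G}(y) \setminus \{x\}$, as claimed.

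The only subtlety — and the thing to state carefully rather than a real obstacle — is the order in which edges get added and the requirement that each added edge create a genuinely \emph{new} copy of $H$. One must phrase the argument so that it does not matter when the edges $xz$ or $yz$ get added: if both were already present in $G_i$ there is nothing to do; if exactly one is present, the above shows the other is forced; if neither is present, then once one of them is added at some later stage (which it eventually is, since $\widehat G$ could a priori fail to be complete, so I should instead argue by considering the final graph $\widehat G$ directly). Cleanest is a direct argument on $\widehat G = G_\infty$: suppose for contradiction $z \in N_{\widehat G}(x) \setminus (N_{\widehat G}(y) \cup \{y\})$; look at the stage $i$ at which $xz$ first appears (or $i=0$); since $W \subseteq N_{\widehat G}(x) \cap N_{\widehat G}(y)$ and these edges appear by some finite stage, pick $i$ large enough that $G_i$ contains $xz$ and all edges from $x,y$ to $W$ but not $yz$ — then adding $yz$ to $G_i$ would create the new copy $K_{2,t}$ on $\{x,y\}$, $W \cup \{z\}$, so $yz \in E(\widehat G)$, a contradiction. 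Symmetry finishes it.
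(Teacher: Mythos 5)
Your proof is correct. The paper gives no explicit proof of this lemma (it only remarks that it is ``easily obtained from the definition''), and your final, direct argument on $\widehat G$ --- fix $W\subseteq N_G(x)\cap N_G(y)$ with $|W|=t-1$; if some $z\notin W\cup\{x,y\}$ satisfies $xz\in E(\widehat G)$ but $yz\notin E(\widehat G)$, then taking a stage $G_i$ already containing $xz$ (and all of the $G$-edges from $x,y$ to $W$), adding $yz$ would create a new copy of $K_{2,t}$ on parts $\{x,y\}$ and $W\cup\{z\}$, forcing $yz\in E(\widehat G)$, a contradiction --- is precisely the intended one.
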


\begin{lemma}\label{t-1}
Let   $G$  be  a connected graph containing    a copy of $K_{t-1,t-1}$ as a subgraph. Then $\widehat{G}$  is either a complete graph, a complete bipartite graph   or a complete split graph with   the   clique part of size   $t-1$.
\end{lemma}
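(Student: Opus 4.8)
The plan is to study the saturated graph $\widehat G$ directly. Since the percolation process stabilises, $\widehat{\widehat G}=\widehat G$, so no non-edge of $\widehat G$ can be added so as to create a new copy of $K_{2,t}$. Concretely this means: for every non-edge $uv$ of $\widehat G$ and every $p\in N_{\widehat G}(v)$, the vertices $u$ and $p$ have at most $t-2$ common neighbours in $\widehat G$ other than $v$ (and the same with $u$ and $v$ interchanged); indeed, $t-1$ common neighbours $q_1,\dots,q_{t-1}$ avoiding $v$, together with $\{u,p\}$ and $v$, would form a copy of $K_{2,t}$ through the new edge $uv$. Fix once and for all the two sides $A$ and $B$ of a copy of $K_{t-1,t-1}$ in $G$; note that $\widehat G$ is connected and still contains this copy.

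Next I would extract the structural consequences of Lemma~\ref{nei}, applied with $\widehat G$ playing the role of $G$ (legitimate since $\widehat{\widehat G}=\widehat G$). Any two vertices of $A$ have all of $B$, hence at least $t-1$ vertices, as common neighbours in $\widehat G$, so they are \emph{twins}: their $\widehat G$-neighbourhoods agree off each other. Hence $A$ is a clique or an independent set of $\widehat G$, and every vertex outside $A$ is joined to all of $A$ or to none of $A$; the same holds for $B$, and, by the same reasoning, for any vertex set all of whose members are joined in $\widehat G$ to a common set of $t-1$ vertices. The argument now splits according to whether $A$ and $B$ are cliques or independent sets of $\widehat G$.

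If both are cliques, then $A\cup B$ is a clique on $2t-2\ge t$ vertices, and (by the reformulation) the set $W$ of vertices joined to every other vertex of $A\cup B$ is a clique containing $A\cup B$. I claim $W=V(G)$: any vertex joined to some vertex of $A\cup B$ lies in $W$ by the all-or-none and twin principles, while a vertex $v$ joined to no vertex of $A\cup B$ but having a neighbour $w\in W$ would let one add $va$ (for $a\in A$) through the $K_{2,t}$ with sides $\{a,w\}$ and $\{v\}$ together with $t-1$ vertices of $A\cup B$, contradicting saturation; so no vertex outside $W$ has a neighbour in $W$, and connectedness gives $W=V(G)$, so $\widehat G$ is complete. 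Otherwise one of $A,B$, say $B$, is an independent set. The twin principle applied to $B$ shows that no vertex outside $A\cup B$ can be joined to all of both $A$ and $B$ (it would be a twin of some $b\in B$ and force an edge inside $B$), so the vertices outside $A\cup B$ split into $X_A$ (joined to all of $A$, none of $B$), $X_B$ (all of $B$, none of $A$) and $Z$ (none of either). One checks that $X_A$ and $X_B$ are independent --- an edge $x_1x_2$ inside $X_A$ would let one add $bx_1$ ($b\in B$) through the $K_{2,t}$ with sides $\{b,x_2\}$ and $\{x_1\}\cup A$ --- and that $Z=\varnothing$: a vertex $z\in Z$ is joined to no vertex of $A\cup B$, so, $\widehat G$ being connected, it has a neighbour $x$ in $X_A$ (or symmetrically in $X_B$), and then $bz$ can be added through the $K_{2,t}$ with sides $\{b,x\}$ and $\{z\}\cup A$; both contradict saturation. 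If moreover $A$ is a clique, the twin principle applied to $A$ gives $X_B=\varnothing$, so $V(G)=A\sqcup B\sqcup X_A$ with the clique $A$ completely joined to the independent set $B\cup X_A$, i.e.\ $\widehat G$ is a complete split graph whose clique part $A$ has size $t-1$. If instead $A$ is independent, one shows in addition that $X_A$ and $X_B$ are completely joined (a missing edge $xy$, $x\in X_A$, $y\in X_B$, can be added through the $K_{2,t}$ with sides $\{x,b\}$ and $\{y\}\cup A$, for any $b\in B$), so that $A\cup X_B$ and $B\cup X_A$ are independent sets with all edges between them present, and $\widehat G$ is complete bipartite.

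The main obstacle is the bookkeeping in this last paragraph: choosing the right partition of $V(G)$ into blocks according to adjacency to $A$ and to $B$, and then eliminating the block $Z$ and every unwanted edge or non-edge by producing a single explicit copy of $K_{2,t}$ through a new edge. Each such gadget is short --- the size-$t$ side is always $\{v\}\cup A$ (or $\{v\}\cup B$) for the relevant vertex $v$ --- but there are several of them, and keeping track of which blocks are joined to which, together with verifying that the clique structure (or the bipartition) is genuinely \emph{forced} and not merely consistent with saturation, is where the care is needed.
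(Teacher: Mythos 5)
Your proof is correct and hinges on the same key fact as the paper's, namely the twin relation coming from Lemma~\ref{nei}: vertices with $t-1$ common neighbours in $\widehat G$ have identical closed neighbourhoods. The paper packages this as an explicit equivalence relation $\thickapprox$ on $V(\widehat G)$, observes that each class is a clique or an independent set with all-or-nothing adjacency between classes, and then disposes of everything with a single connectivity argument showing $V(G)=[A]\cup[B]$; the three possible outcomes (complete, complete bipartite, complete split) are then just the three ways two classes can sit next to each other, and the bound $|C|\leqslant t-1$ on the clique part is a one-line corollary of the same twin observation. You instead case-split on whether $A$, $B$ are cliques or independent sets, partition the remaining vertices into $X_A$, $X_B$, $Z$ by adjacency to $A$ and $B$, and kill each unwanted configuration with an explicit $K_{2,t}$ gadget. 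That is more bookkeeping for the same yield, but it works and it even gives $|C|=t-1$ directly (as $C=A$) rather than via a separate argument.

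One small slip worth fixing: in the elimination of $Z$, you assert that a vertex $z\in Z$ ``has a neighbour $x$ in $X_A$ (or symmetrically in $X_B$)''. That is not true of an arbitrary $z\in Z$, since $z$ might have all its neighbours inside $Z$. The correct statement is that \emph{if} $Z\neq\varnothing$, then connectedness of $\widehat G$ gives an edge between $Z$ and $V(G)\setminus Z$; since no vertex of $Z$ has a neighbour in $A\cup B$, that edge lands in $X_A\cup X_B$, and one applies the gadget to the $Z$-endpoint of \emph{that} edge. A similar remark applies to the ``both cliques'' case: you must argue about the boundary of $W$, not an arbitrary vertex outside $W$ — which is exactly what your final sentence ``no vertex outside $W$ has a neighbour in $W$'' does, so that case is fine. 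With the $Z$ step rephrased this way, the argument is complete.
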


\begin{proof}
We consider the relation $\thickapprox$ on $V(\widehat{G})$  as follows:
$$x\thickapprox y \mbox{\quad if \quad} N_{\widehat{G}}(x)\setminus\{y\}=N_{\widehat{G}}(y)\setminus\{x\}.$$
It is straightforward  to check that $\thickapprox$ is an equivalence relation. Furthermore, each equivalence class is either an independent set or a  clique  and between any  two classes either there is no edge or all  possible  edges are present.

Let $H$ be a copy of $K_{t-1,t-1}$ in $G$ with bipartition $V(H)=A\cup B$.
It follows from  Lemma \ref{nei} that   $A$, and similarly $B$,  is contained in some  equivalence class.  Let $\text{\bf[}A\text{\bf]}$ and $\text{\bf[}B\text{\bf]}$ be the equivalence classes containing $A$ and $B$, respectively. Note  that  $\text{\bf[}A\text{\bf]}$ and $\text{\bf[}B\text{\bf]}$ are not necessary distinct.   We show that $V(G)=\text{\bf[}A\text{\bf]}\cup\text{\bf[}B\text{\bf]}$ which implies the assertion of the lemma.  By contradiction, suppose that $V(G)\neq\text{\bf[}A\text{\bf]}\cup\text{\bf[}B\text{\bf]}$. As $G$ is connected, there is  a vertex $v\notin\text{\bf[}A\text{\bf]}\cup\text{\bf[}B\text{\bf]}$ with a  neighbor  in  $\text{\bf[}A\text{\bf]}$ or $\text{\bf[}B\text{\bf]}$, say $\text{\bf[}A\text{\bf]}$. Note  that $v$ is adjacent to the whole $\text{\bf[}A\text{\bf]}$. Therefore,    $|N_G(v)\cap N_G(w)|\geqslant t-1$ for   arbitrarily chosen   vertex  $w\in B$. Using   Lemma \ref{nei}, $v\thickapprox w$ and hence  $v\in\text{\bf[}B\text{\bf]}$, a contradiction.

Now, assume that $\widehat{G}$ is a complete split graph with the   independent part $I$ and the   clique part $C$. Note that $I$ and  $C$ are   the equivalence classes of  $\thickapprox$.    If $|C|\geqslant t$, then every two  vertices  $x\in I$ and $y\in C$ have at least $t-1$ common neighbors in $C$. Hence,   Lemma \ref{nei} yields that  $x\thickapprox y$,  a contradiction.
\end{proof}

\begin{definition}\label{ht}
For two  positive integers $r$ and  $s$, consider $s$ copies of $K_{2,r}$ and let    $\{u_i, u'_i\}$ be  a   part  of size $2$    in   the   $i$th copy. We denote  by  $\mathpzc{G}_r(u; u_1, \ldots, u_s)$  the graph obtained  by     identifying  all $u'_1, \ldots, u'_s$ to a single vertex $u$. For instance,  the graph  $\mathpzc{G}_4(u; u_1, u_2, u_3)$ is depicted in  Figure \ref{f1f}. For an integer   $t\geqslant4$,   let $r=\lfloor (t-1)/2\rfloor$ and $s=t-1-r$. We define   $\mathpzc{H}_t$   as  the graph made   of  the vertex disjoint graphs  $\mathpzc{G}_{t-1}(u; u_1, \ldots, u_r)$,  $\mathpzc{G}_{s-1}(v; v_1, \ldots, v_s)$ and  $\mathpzc{G}_{r-1}(w; w_1, \ldots, w_{t-2})$ by joining $u$ to   $v, v_1, \ldots, v_s$ and  $v$ to  $w, w_1, \ldots, w_{t-2}$. For example, the graph    $\mathpzc{H}_8$ is   shown  in Figure \ref{f2f}.
\begin{figure}[H]
\centering
\includegraphics[width=0.4\textwidth]{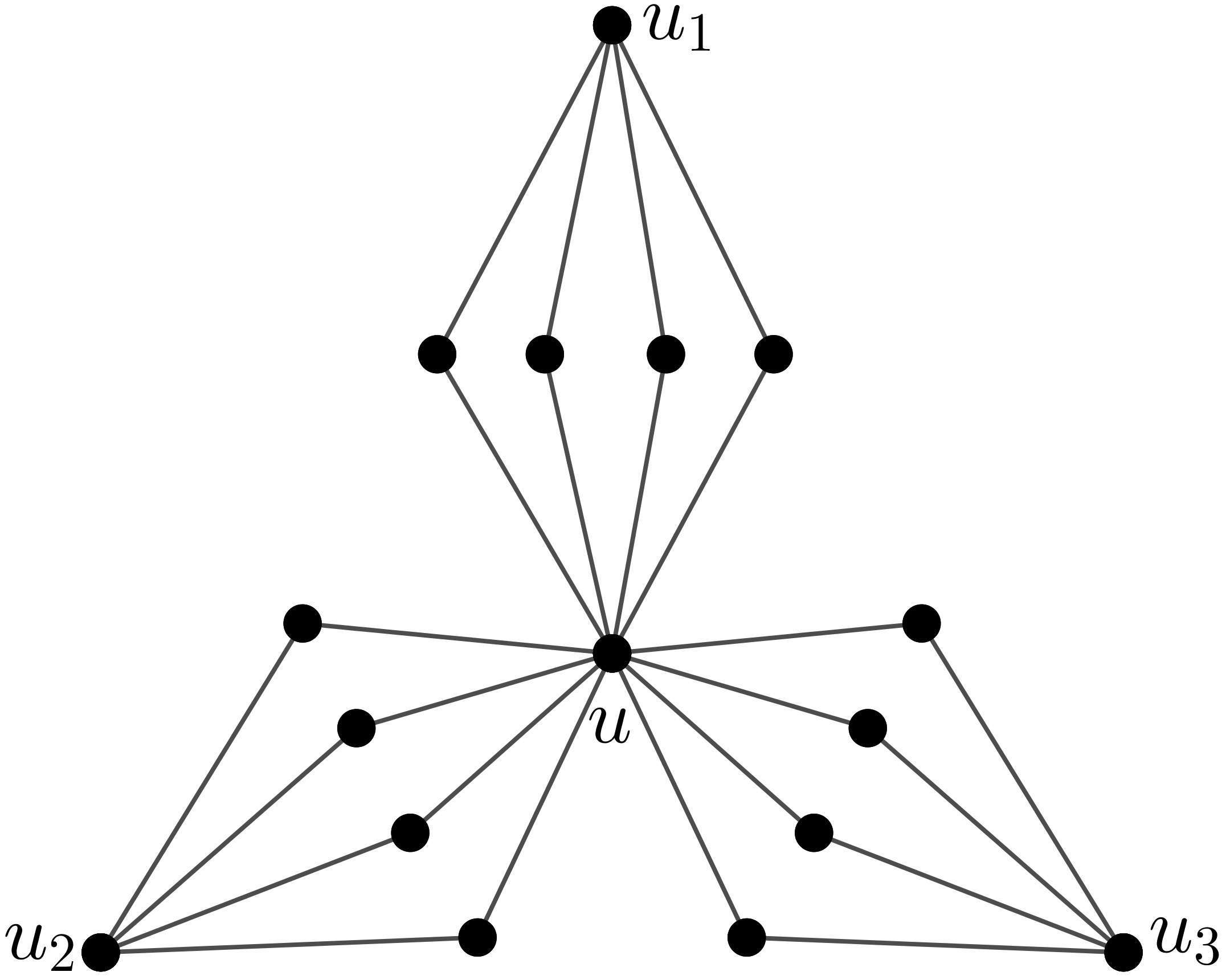}
\caption{The graph $\mathpzc{G}_4(u; u_1, u_2, u_3)$.}\label{f1f}
\end{figure}
\begin{figure}[H]
\centering
\includegraphics[width=1\textwidth]{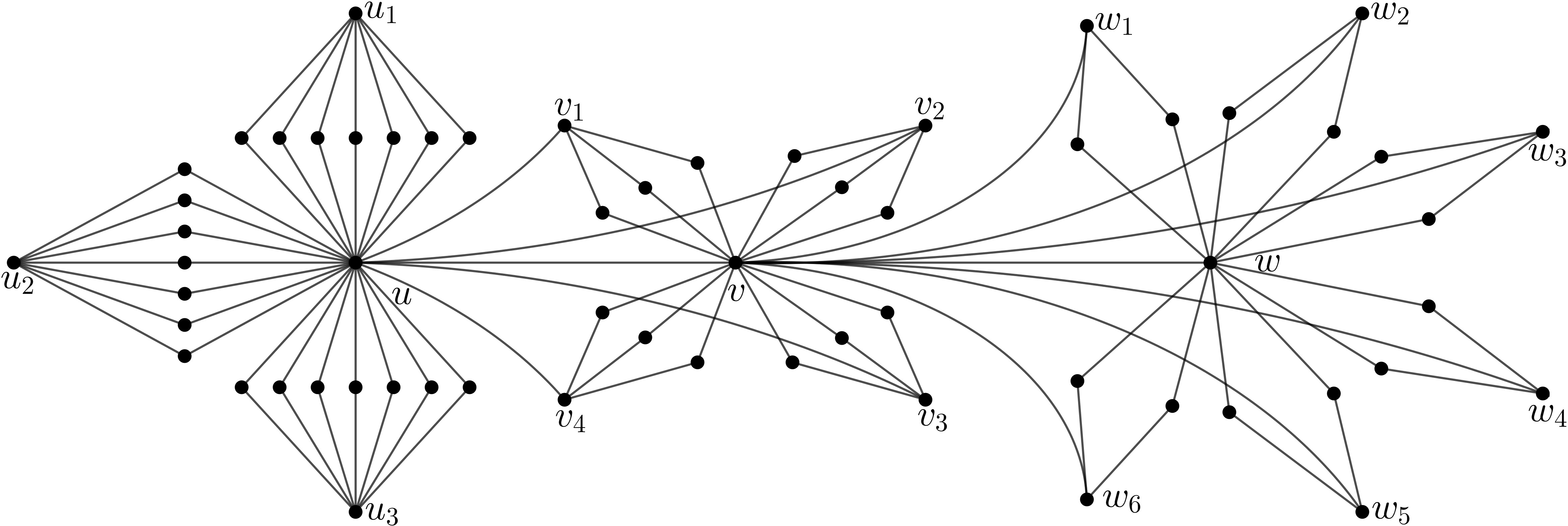}
\caption{The graph $\mathpzc{H}_8$.}\label{f2f}
\end{figure}
\end{definition}

\begin{thm}\label{density}
For any $t\geqslant4$,  $m(\mathpzc{H}_t)=\eta(t)$.
\end{thm}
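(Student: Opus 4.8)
The plan is to prove $m(\mathpzc{H}_t)\ge\eta(t)$ and $m(\mathpzc{H}_t)\le\eta(t)$ separately; note that $\mathpzc{H}_t$ is itself \emph{not} the densest among its subgraphs, so the first bound must be witnessed by a proper subgraph. For $m(\mathpzc{H}_t)\ge\eta(t)$ I would just exhibit a dense subgraph and count. When $t$ is odd, $r=s=(t-1)/2$ and the single block $\mathpzc{G}_{t-1}(u;u_1,\ldots,u_r)$ should already work: it has $1+rt=(t^2-t+2)/2$ vertices and $2(t-1)r=(t-1)^2$ edges, hence density $2(t-1)^2/(t^2-t+2)=\eta(t)$. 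When $t$ is even, $r=(t-2)/2$ and $s=t/2$, the first block is too sparse, so I would take the union of $\mathpzc{G}_{t-1}(u;u_1,\ldots,u_r)$ and $\mathpzc{G}_{s-1}(v;v_1,\ldots,v_s)$ together with the $s+1$ joining edges $uv,uv_1,\ldots,uv_s$, and check it has $(3t^2-4t+8)/4$ vertices and $(3t^2-7t+6)/2$ edges, so density $\eta(t)$. Both are direct computations.

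For $m(\mathpzc{H}_t)\le\eta(t)$ I would first reduce to a finite optimisation. Call $u,v,w$ and the vertices $u_i,v_j,w_k$ the \emph{hubs}; every other vertex has degree $2$ and is a common neighbour of exactly one hub pair, every edge joins a hub to such a degree-$2$ vertex or joins two hubs, and the hubs span a forest (a double star at $u$ and $v$ plus the isolated vertices $u_1,\ldots,u_r$). From the forest structure one gets the a priori bound $m(\mathpzc{H}_t)<2$, and since $m(\mathpzc{H}_t)\ge d(\mathpzc{H}_t)>1$ it follows that a density-maximising vertex set $S$ has no vertex of degree at most $1$ in $\mathpzc{H}_t[S]$ and contains every degree-$2$ vertex both of whose neighbours lie in $S$. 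Hence such an $S$ is a hub set $T$ together with all degree-$2$ vertices having both neighbours in $T$, and for each block $T$ either drops a hub $u_i$ (resp.\ $v_j$, $w_k$) entirely or keeps it along with $u$ (resp.\ $v$, $w$) and all $t-1$ (resp.\ $s-1$, $r-1$) degree-$2$ vertices it shares. This turns $e_{\mathpzc{H}_t}(S)/|S|$ into an explicit linear-fractional function of the three counts $\alpha=|T\cap\{u_1,\ldots,u_r\}|$, $\beta=|T\cap\{v_1,\ldots,v_s\}|$, $\gamma=|T\cap\{w_1,\ldots,w_{t-2}\}|$ and the three indicators of whether $u,v,w\in T$.

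Since a ratio of affine functions is maximised over a box at a vertex, it then remains to evaluate the density at the corner choices $\alpha\in\{0,r\}$, $\beta\in\{0,s\}$, $\gamma\in\{0,t-2\}$ (with $u,v,w$ present exactly when forced by the corresponding hubs, and discarding the few degenerate combinations) and compare each with $\eta(t)$; after clearing denominators each comparison becomes a quadratic inequality in $t$. I expect the extremal corner to be $(\alpha,\beta,\gamma)=(r,0,0)$ for odd $t$ and $(r,s,0)$ for even $t$ — exactly the subgraphs used for the lower bound — so that equality holds and $m(\mathpzc{H}_t)=\eta(t)$. The main obstacle I anticipate is the bookkeeping in the reduction: carefully excluding mixed hub sets (e.g.\ one containing $v$ but not $u$, or a $v_j$ kept without its shared degree-$2$ vertices, which become genuine extra constraints when $r$ or $s$ is small, as happens for $t=4,5$), and then organising the corner computations so that the even and odd cases, where $r$ and $s$ change form, do not proliferate into an unwieldy number of subcases. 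The lower bound and the linear-fractional reduction are routine; the residual numerical inequalities in $t$ are where the real work lies.
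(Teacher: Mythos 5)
Your lower bound is exactly the paper's: your witnesses $\mathpzc{G}_{t-1}(u;u_1,\ldots,u_r)$ for odd $t$ and the two-block union for even $t$ are precisely the sets $G[\{u\}\cup A]$ and $G[\{v\}\cup A\cup B]$ at which the paper's Theorem~\ref{density} is sharp, and your vertex and edge counts check out. On the upper bound you arrive at the same finite list of candidate subsets, but by a genuinely different mechanism. The paper works with a density-maximizing subgraph $H$ of \emph{minimum vertex count} and uses a delete-then-add argument (its Fact~3) to force the all-or-nothing structure on each block: removing $N_G[u_1]$ must strictly drop density by minimality, which pins $m$ below $2(t-1)/t$, which in turn shows that adjoining $N_G[u_2]$ would raise density. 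You instead observe that the hubs induce a forest (giving $m<2$ directly and painlessly), parametrize a candidate $S$ by $(\alpha,\beta,\gamma)$ and the $u,v,w$-indicators, and invoke corner-extremality of a linear-fractional function on a box. Your route is conceptually cleaner --- it explains \emph{why} only the extreme sets need checking --- but it needs one more layer of care than your sketch supplies: the indicators are not free box coordinates (they are partially forced by $(\alpha,\beta,\gamma)$ and, for small $t$, by minimum-degree constraints), so the density is only \emph{piecewise} linear-fractional on the box. To close this you would either fix each of the $2^3$ indicator patterns and iterate, or argue once that including $u$, $v$, $w$ whenever degree-feasible can only help (note the paper's $A,B,C$ automatically contain $u$ and $v$, so it never faces this choice). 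Similarly, your claim that $v_j\in T$ forces $u\in T$ is automatic by degree only for $t\le5$; for larger $t$ it needs the one-line observation that adding $u$ cannot decrease density. These are real but small gaps; once filled, the remaining corner comparisons are the same ``matter of straightforward calculation'' the paper defers to.
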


\begin{proof}
For convenience,  let  $G=\mathpzc{H}_t$ and  $m=m(G)$. Assume that  $H$ is  a subgraph of $G$  with minimum possible number of vertices satisfying   $d(H)=m$.  We need to prove the following facts about  $H$.

\noindent{\bf\textsf{Fact 1.}} The minimum degree of  $H$ is    $2$.

Since  $t\geqslant 4$ and $G$ contains a copy of $K_{2,t-1}$, we find  that  $m>1$. For each vertex  $v\in V(H)$, it follows from    $d(H-v)\leqslant d(H)$ that $\deg_H(v)\geqslant m$. Therefore, the minimum degree of  $H$ is  at least    $2$.   On the other hand, it is easily  seen that   $G$ has no subgraph with the minimum degree more than $2$, implying the desired property.

\noindent{\bf\textsf{Fact 2.}}  For every  two  distinct vertices  $x,y \in V(H)$,  $N_G(x)\cap N_G(y)\subseteq V(H)$.

For  a vertex $v\in V(H)$ with $\deg_H(v)=2$, it follows from the minimality of $|V(H)|$ that     $d(H-v)<d(H)$ which in turn implies  that $m<2$. Now,
if a vertex $x\in V(G)\setminus V(H)$ is adjacent to at least two vertices  in $V(H)$, then it follows from  $m<2$   that $d(G[\{x\}\cup V(H)])>d(H)$, a contradiction.   This shows the correctness of Fact 2.

\noindent{\bf\textsf{Fact 3.}}  If $u_{i_0}\in V(H)$ for some $i_0$, then $u$ and all $u_i$ are contained in $V(H)$.  Similar statements hold for $v_i$ and $w_i$.

By contradiction, without   loss of generality, assume that  $u_1\in V(H)$ and  $u_2\notin V(H)$.  Facts 1 and 2   imply that  $\{u\}\cup N_G(u_1)\subseteq V(H)$ and $N_G(u_2)\cap V(H)=\varnothing$.  The   minimality of $|V(H)|$ forces   that     $d(H-N_G[u_1])<d(H)$ which in turn   yields  that   $m<2(t-1)/t$. This shows    that  $d(G[N_G[u_2]\cup V(H)])>d(H)$, a contradiction.       The proofs  for   $v_i$ and $w_i$ are similar.

Applying   Facts 1{\bf--}3 and noting that  $H$ is an induced subgraph of $G$, we are left with only seven candidates  for $V(H)$ as described below. Letting
$$A=\bigcup_{i=1}^rN_G[u_i], \, \,   B=\bigcup_{i=1}^sN_G[v_i] \, \,   \text{ and } \, \,   C=\bigcup_{i=1}^{t-2}N_G[w_i],$$
where $r, s$ are as defined  in Definition \ref{ht},   $V(H)$ is equal to one of the subsets
$$\{u\}\cup A, \{v\}\cup B, \{w\}\cup C, \{v\}\cup A \cup B, \{w\}\cup B\cup C,  \{u, w\}\cup A\cup C, \{w\}\cup A\cup B\cup C.$$
It is a  matter of straightforward  calculation to show  that,   among the      subgraphs of $G$ induced on these seven subsets, the maximum density occurs in $G[\{u\}\cup A]$ if $t$ is odd and in $G[\{v\}\cup A\cup B]$, otherwise. Since $$d\big(G[\{u\}\cup A]\big)=\frac{2t^2-4t+2}{t^2-t+2} \, \,   \mbox{ and } \, \, d\big(G[\{v\}\cup A\cup B]\big)=\frac{6t^2-14t+12}{3t^2-4t+8},$$   the proof is complete.
\end{proof}

Now we are ready to prove our upper  bound on $p_c(n; K_{2,t})$.

\begin{thm}\label{upper}
For any fixed integer  $t\geqslant4$, $$p_c(n; K_{2,t})=\mathrm{O}\hspace{-1mm}\left(n^{-\tfrac{1}{\eta(t)}}\right).$$
\end{thm}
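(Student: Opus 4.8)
The plan is to exhibit an explicit "seed" graph whose percolated closure is complete, and whose appearance in $\mathbbmsl{G}(n,p)$ is governed by the density $\eta(t)$. The natural candidate for the seed is the graph $\mathpzc{H}_t$ from Definition \ref{ht}, possibly together with a few extra vertices or a small dense graph attached to make it connected with a copy of $K_{t-1,t-1}$ available after percolation. First I would verify that if $G$ contains a copy of $\mathpzc{H}_t$, then repeatedly applying the $K_{2,t}$-closure operation from the three "book" pieces $\mathpzc{G}_{t-1}(u;\ldots)$, $\mathpzc{G}_{s-1}(v;\ldots)$, $\mathpzc{G}_{r-1}(w;\ldots)$ together with the edges $uv, uv_i, vw_i$ forces enough new edges to build a $K_{t-1,t-1}$; the point of the three pieces is precisely that the central vertices $u,v,w$ accumulate $\geqslant t-1$ common neighbours with many other vertices, so Lemma \ref{nei} collapses large vertex sets into single equivalence classes. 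Once a $K_{t-1,t-1}$ is present inside $\widehat{G}$, Lemma \ref{t-1} says $\widehat{G}$ (on the whole vertex set of a connected component, hence with high probability on all of $[n]$) is complete, a complete bipartite graph, or a complete split graph with clique part of size $t-1$; I would rule out the last two possibilities by noting that $\mathpzc{H}_t$ (or the augmented seed) already contains, or quickly generates, edges and non-edges incompatible with those two structures — e.g. a triangle plus an independent triple in the right configuration.

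Next I would bring in probability. By Theorem \ref{bolo}, $\mathbbmsl{G}(n,p)$ contains a copy of any fixed graph $F$ with high probability as soon as $p \gg n^{-1/m(F)}$. Taking $F$ to be the seed graph and invoking Theorem \ref{density}, which gives $m(\mathpzc{H}_t)=\eta(t)$, we get that for $p \gg n^{-1/\eta(t)}$ the seed appears with high probability — provided the augmentation I add to ensure connectivity and to kill the split/bipartite cases does not raise the maximum subgraph density above $\eta(t)$. The cleanest route is to add only a bounded number of pendant or low-degree vertices (or a single extra edge), which leaves $m$ unchanged, and to handle global connectivity separately: for $p \gg n^{-1/\eta(t)}$ we certainly have $p \gg \log n / n$, so $\mathbbmsl{G}(n,p)$ is connected with high probability, and then the copy of $\mathpzc{H}_t$ lies in the unique component. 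Combining: with high probability $\mathbbmsl{G}(n,p)$ percolates in the $K_{2,t}$-bootstrap process whenever $p \gg n^{-1/\eta(t)}$, which by the definition of $p_c(n;K_{2,t})$ and the Bollob\'as--Thomason result yields $p_c(n;K_{2,t}) = \mathrm{O}(n^{-1/\eta(t)})$.

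I expect the main obstacle to be the combinatorial heart of the argument: proving rigorously that a single copy of $\mathpzc{H}_t$ really does percolate, i.e. that the closure operation, starting only from the edges of $\mathpzc{H}_t$, actually manufactures a $K_{t-1,t-1}$. This requires carefully tracking which non-edges get filled in and in what order. The design of $\mathpzc{H}_t$ strongly suggests the mechanism — each $\mathpzc{G}_{2r}(x;x_1,\ldots,x_k)$ piece is a "book" of $K_{2,r}$'s sharing the vertex $x$, so each $x_i$ has $r$ common neighbours with $x$ and with every other $x_j$ through the shared pages, while the extra edges at $u$ and $v$ supply the remaining common neighbours needed to reach the threshold $t-1$ — but converting this into a clean inductive description of the equivalence classes of $\thickapprox$ growing step by step, and checking the parity split between $t$ even and $t$ odd (which is exactly where $r=\lfloor (t-1)/2\rfloor$ and $s=t-1-r$ enter), is where the real work lies. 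A secondary, more routine obstacle is confirming that whatever minimal augmentation I use to guarantee connectivity and exclude the complete-bipartite and complete-split outcomes of Lemma \ref{t-1} does not disturb the density bound from Theorem \ref{density}.
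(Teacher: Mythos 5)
Your proposal follows the paper's argument at every high-level step: use $\mathpzc{H}_t$ as the seed, show via repeated application of Lemma~\ref{nei} that its closure contains a $K_{t-1,t-1}$, apply Lemma~\ref{t-1} to constrain $\widehat{G}$, invoke Theorems~\ref{bolo} and~\ref{density} for the appearance threshold, and use $\eta(t)>1$ to secure connectivity. You also correctly identify the combinatorial heart --- tracking how the equivalence classes of Lemma~\ref{nei} merge when starting from $\mathpzc{H}_t$ --- and the mechanism you describe (the three book pieces feeding $u,v,w$ enough common neighbours in sequence) is exactly what the paper carries out, in three clean stages: $u\thickapprox u_i$ forces each $u_i$ adjacent to $v,v_1,\ldots,v_s$; then $v\thickapprox v_j$ forces each $v_j$ adjacent to $w,w_1,\ldots,w_{t-2}$; then $w\thickapprox w_k$, producing the $K_{t-1,t-1}$.

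The genuine gap is your plan for eliminating the complete bipartite and complete split outcomes of Lemma~\ref{t-1}. You want to do this by augmenting the seed with a small fixed structure (``a triangle plus an independent triple in the right configuration'') while keeping $m$ at $\eta(t)$. For the split case this cannot work. Ruling out a complete split graph with clique part $|C|\leqslant t-1$ requires either a vertex of $\widehat{G}$-degree strictly between $t-1$ and $n-1$, or a clique of size at least $t+1$. A fixed finite seed cannot certify the former, since its closure says nothing about a vertex's degree in all of $\widehat{G}$ relative to $n$; and $K_{t+1}$ has density $t/2\geqslant 2>\eta(t)$ for every $t\geqslant 4$, so any augmentation dense enough to force such a clique would push $m$ above $\eta(t)$ and destroy the appearance threshold you need. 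The paper instead uses global properties of $\mathbbmsl{G}(n,p)$: nonbipartiteness (triangles appear for $p\gg n^{-1}$) kills the complete bipartite case, and the minimum-degree bound --- with high probability every vertex has at least $np/2\gg t$ neighbours --- kills the split case, since a vertex in the independent part would have all those neighbours inside $C$, forcing $|C|\geqslant t$ and contradicting Lemma~\ref{t-1}. No seed augmentation is needed or used; $\mathpzc{H}_t$ alone suffices, and the degenerate outcomes are excluded by the ambient random graph, not by the local seed.

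A smaller point: you leave the verification that $\widehat{\mathpzc{H}_t}\supseteq K_{t-1,t-1}$ as an expected obstacle. It is in fact short once you apply Lemma~\ref{nei} in the staged order just described, with the crucial arithmetic being that $r=\lfloor(t-1)/2\rfloor$ common neighbours from the first book plus the $s+1$ new adjacencies to $\{v\}\cup\{v_1,\ldots,v_s\}$ bring $v$ and $v_j$ to $t-1$ common neighbours, and similarly at the third stage --- the parity split between $t$ even and odd is absorbed entirely by $r+s=t-1$.
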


\begin{proof}
Let  $G\thicksim\mathbbmsl{G}(n, p)$ and  $p\gg n^{-1/\eta(t)}$. Using    Theorems \ref{bolo} and  \ref{density}, $G$  with high probability contains a copy  of
$\mathpzc{H}_t$, say $H$. Applying   Lemma \ref{nei}, $N_{\widehat{H}}(u)\setminus\{u_i\}=N_{\widehat{H}}(u_i)\setminus\{u\}$ for $i=1, \ldots, r$,  where $r$ is as  defined  in Definition \ref{ht}. This shows that $u_i$ is adjacent to $v, v_1, \ldots, v_s$ for any  $i$. Hence, $|N_{\widehat{H}}(v)\cap N_{\widehat{H}}(v_j)|\geqslant  t-1$  for $j=1, \ldots, s$,  where $s$  is as   defined  in Definition \ref{ht}. Again, it   follows from    Lemma \ref{nei} that $N_{\widehat{H}}(v)\setminus\{v_j\}=N_{\widehat{H}}(v_j)\setminus\{v\}$  for any $j$.  This shows that $v_j$ is adjacent to $w, w_1, \ldots, w_{t-2}$ for any  $j$. Therefore, for any $k$,  $|N_{\widehat{H}}(w) \cap N_{\widehat{H}}(w_k)|\geqslant t-1$ which implies that   $N_{\widehat{H}}(w)\setminus\{w_k\}=N_{\widehat{H}}(w_k)\setminus\{w\}$  by    Lemma \ref{nei}. This shows that $\widehat{H}$,   and in turn  $\widehat{G}$,   contains   a copy of $K_{t-1, t-1}$. Since  $p\gg  \log n/n$, $G$ with high probability  is   connected and nonbipartite by  \cite[Theorem\,4.1]{fri} and Theorem \ref{bolo}. So,     Lemma \ref{t-1} yields  that $\widehat{G}$ is either  a  complete split graph  or a complete graph. If  $\widehat{G}$ is a complete split graph with the  independent part $I$ and the  clique part $C$, then each vertex in $I$  has at least $np/2$ neighbors in $C$ with high probability \cite[Theorem\,3.4]{fri}. Thus,    $|C|\geqslant t$ which contradicts Lemma \ref{t-1}. Consequently,  $\widehat{G}$ is complete and the result follows.
\end{proof}

It is natural to ask whether the upper  bound given in Theorem \ref{upper} is   in fact a threshold. So,  we pose the following question.

\begin{question}\label{que}
Is it true that $p_c(n; K_{2,t})=\Theta\hspace{-0.7mm}\left(n^{-1/\eta(t)}\right)$ for any $t\geqslant4$?
\end{question}

For $t=4$,  an   affirmative answer to  Question \ref{que} is given by the following theorem.

\begin{thm}\label{1310}
$p_c(n; K_{2,4})=\Theta\hspace{-0.7mm}\left(n^{-{10}/{13}}\right)$.
\end{thm}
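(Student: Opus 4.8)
The plan is to establish the matching lower bound $p_c(n;K_{2,4}) = \Omega(n^{-10/13})$, since Theorem~\ref{upper} already gives the upper bound $\mathrm{O}(n^{-10/13})$ (note $\eta(4) = \frac{6\cdot16 - 56 + 12}{3\cdot16 - 16 + 8} = \frac{52}{40} = \frac{13}{10}$). So I need to show that if $p \ll n^{-10/13}$, then with high probability $G \sim \mathbbmsl{G}(n,p)$ does \emph{not} percolate in the $K_{2,4}$-bootstrap process. The natural strategy is the standard weak-saturation / bootstrap obstruction argument: exhibit a graph parameter (a ``potential function'' or a linear-algebraic weight) that cannot decrease when an edge is added that creates a new copy of $K_{2,4}$, and show that for sparse $G$ this parameter already certifies that $\widehat{G}$ is not complete. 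Concretely, one assigns to each \emph{non-edge} of $G$ a vector in some space so that whenever adding edge $e$ creates a new $K_{2,4}$, the vector of $e$ lies in the span of the vectors of the other non-edges of that copy; then $\dim(\text{span of all non-edge vectors})$ is monotone and, if it is strictly less than $\binom{n}{2} - |E(G)|$ initially, percolation fails. Alternatively, and more in the spirit of this paper, one tracks how the addition of each edge is ``witnessed'' by a bounded local structure and runs a first-moment computation showing the relevant seeded structures do not appear below the threshold.

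The key steps, in order: (1) identify the minimal ``witness'' configuration --- the sparsest graph $F$ such that a copy of $F$ in $G$ is necessary for any edge to ever be added, analogous to how $\mathpzc{H}_t$ is the critical structure for the upper bound; by the analysis leading to Theorem~\ref{density} and Theorem~\ref{upper} with $t = 4$, this should be governed by $m(\mathpzc{H}_4) = 13/10$, so that the first edge addition in a bootstrap-percolating process forces $G$ to contain a subgraph of density close to $13/10$. (2) Show that if $G$ percolates, then $\widehat{G}$ complete forces, by a counting/entropy argument run backwards through the percolation process, that $G$ must contain a copy of some graph $F^\ast$ with $m(F^\ast) \geqslant 13/10$ --- essentially, that the ``spanning structure'' whose closure is $K_n$ must already have had enough edges locally. (3) Apply Theorem~\ref{bolo}: if $p \ll n^{-10/13} = n^{-1/m(F^\ast)}$, then with high probability $G$ contains no copy of $F^\ast$, hence does not percolate. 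Steps (1) and (2) are where one uses the specific combinatorics of $K_{2,4}$: when an edge $xy$ is added, the new $K_{2,4}$ through $xy$ has $xy$ as one edge of the $K_{2,2}$-core or as a spoke, and in either case one gets $4$ or more common-neighbour constraints, which is what makes the density threshold $13/10$ rather than something smaller; a careful case analysis of which role $xy$ plays (it can be in the part of size $2$ or the part of size $t=4$) mirrors the seven-candidate case analysis in the proof of Theorem~\ref{density}.

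The main obstacle I anticipate is step~(2): proving that percolation \emph{genuinely requires} a density-$13/10$ subgraph to be present in $G$ from the start, not merely that each individual edge-addition has a local witness. The difficulty is that local witnesses can overlap and share vertices/edges across the whole percolation process, so a naive union of witnesses over all $\binom{n}{2}$ added edges is far too large and dense; one must argue that \emph{some single} added edge --- say the first one, or one chosen by a clever extremal/averaging argument --- has its witness configuration sitting inside the original $G$ with no ``help'' from previously added edges, and that this configuration is exactly (a subgraph of) $\mathpzc{H}_4$ or something of equal density. Handling this cleanly likely requires defining the right notion of ``minimal percolating witness'' and checking by the same Facts~1--3 style reductions as in Theorem~\ref{density} that its maximum subgraph density is at least $13/10$; alternatively, if the linear-algebraic approach is used, the obstacle becomes constructing the correct vector assignment to non-edges so that the span has dimension $< \binom{n}{2} - |E(G)|$ with high probability, which again reduces to a first-moment bound controlled by $m(\mathpzc{H}_4)$.
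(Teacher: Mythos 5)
There is a genuine gap here, and you actually pinpoint it yourself in your final paragraph: step (2) of your plan --- showing that percolation forces $G$ to contain a bounded subgraph of maximum density at least $13/10$ --- is the whole content of the lower bound, and you do not have a proof of it. The paper does not argue by locating a single ``minimal percolating witness'' in $G$ or by a backwards entropy/counting argument over the process; such an argument is exactly as dangerous as you fear, because witnesses for different added edges can chain and overlap. Instead, the paper computes $\widehat{G}$ \emph{exactly} for any $G$ with no bounded density-$\geqslant 13/10$ subgraph, and checks by hand that it is not complete. Concretely, it greedily extracts a maximal family of vertex-disjoint bounded subgraphs $F_1, F_2, \dots$ of $G$ (each grown out of a copy of $K_{2,3}$ by a specific extension procedure whose length is bounded by the density hypothesis), records in each $F_i$ a collection $\mathscr{A}_i$ of pairs of size two, builds an auxiliary graph $G'$ by, for each such pair $\{a,b\}$, making $a$ and $b$ twins, and then proves $\widehat{G}=G'$ by showing that no two vertices $\{x,y\}$ outside $\bigcup\mathscr{A}_i$ have three common $G'$-neighbours --- the contrary would force a dense bounded subgraph or a contradiction with the maximality/structure (Facts 1--3) of the $F_i$'s. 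Since some pair in $\mathscr{A}_1$ stays non-adjacent in $G'$, percolation fails.

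Two further points. First, your framing ``the first edge addition forces a subgraph of density close to $13/10$'' is not quite right and not what the paper uses: adding a single edge only needs a $K_{2,4}$ minus an edge, which is far sparser; it is the \emph{accumulation} of many additions that produces density, and the paper's disjoint-seed decomposition is precisely what tames that accumulation into bounded, non-interacting pieces. Second, the linear-algebraic (weak saturation / exterior algebra) alternative you gesture at is a legitimate general tool, but you would still need to construct the vector assignment, and nothing in your sketch does that; it is an honest label for the gap, not a fill. In short: correct target, correct appeal to Theorem~\ref{bolo}, correct diagnosis of the hard step, but the hard step is missing, and the paper's mechanism (greedy disjoint $F_i$'s, the auxiliary $G'$, and the verification $\widehat{G}=G'$) is different in kind from anything you propose.
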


\begin{proof}
By Theorem \ref{upper},  it suffices  to prove that  $p_c(n; K_{2,4})=\Omega(n^{-{10}/{13}})$.   If $G\thicksim\mathbbmsl{G}(n, p)$ with  $p\ll n^{-{10}/{13}}$, then  Theorem \ref{bolo}   and   the union bound theorem   imply  that $G$ with high probability contains no  bounded subgraph $H$ with $m(H)\geqslant\tfrac{13}{10}$. So,  in order to prove  $p_c(n; K_{2,4})=\Omega(n^{-{10}/{13}})$, it is enough  to show that any graph with no bounded subgraph $H$ satisfying $m(H)\geqslant\tfrac{13}{10}$  does not percolate  in  $K_{2,4}$-bootstrap process.

Fix a graph   $G$    without any   bounded subgraph $H$ with $m(H)\geqslant\tfrac{13}{10}$.
We define a sequence  $F_1, F_2, \ldots$  of  vertex disjoint subgraphs of $G$  by the following procedure.  At each step  $i$,  we look for  a  copy of $K_{2,3}$ in $H_i=G-\mathsmaller{\bigcup}_{k=1}^{i-1}V(F_k)$. If there is no such a copy, we finish the  procedure.  Otherwise, we choose  a copy $L$ of $K_{2,3}$ in $H_i$  with bipartition $A$ and   $B$, where $|A|=2$. At the beginning of step $i$,  we set $F_i=G[V(L)]$,  $\mathscr{A}_i=\{A\}$, $\mathscr{B}_i=B$, $\ell_i=\ell'_i=0$.

If there exist   two adjacent  vertices $u, v\in V(H_i)\setminus V(F_i)$ such that   $N_G(u)\cap A\neq\varnothing$ and $N_G(v)\cap B\neq\varnothing$, then we do the following: First choose a vertex $w\in N_G(v)\cap B$. Then,   update   $F_i$, $\mathscr{A}_i$,   $\mathscr{B}_i$ to  $G[V(F_i)\cup\{u, v\}]$,  $\mathscr{A}_i\cup\{\{u, w\}\}$,  $(\mathscr{B}_i\cup\{v\})\setminus\{w\}$, respectively,    and    increment $\ell_i$.

Otherwise, perform   the following iterative subprocedure as long as possible:  Find    three  distinct   vertices $u, v, w\in V(H_i) \setminus V(F_i)$ such that  $w\in N_G(u)\cap N_G(v)$ and both    $N_G(u), N_G(v)$ intersect an element  $P\in\mathscr{A}_i$. Add $\{u, v\}$ to $\mathscr{A}_i$ and   $w$ to $\mathscr{B}_i$. In addition, update   $F_i$ to $G[V(F_i)\cup\{u, v, w\}]$ and increment $\ell'_i$.

We now state some properties of $F_i$. According to the procedure,   $|V(F_i)|=2\ell_i+3\ell'_i+5$ and $|E(F_i)|\geqslant3\ell_i+4\ell'_i+6$. If $4\ell_i+\ell'_i\geqslant5$, then   $d(F_i)\geqslant\tfrac{13}{10}$ which contradicts our assumption on $G$. Thus, $|V(F_i)|$ is bounded. The following properties of $F_i$ are also proved using similar arguments.

\noindent{\bf\textsf{Fact 1.}}  $|E(F_i)|=3\ell_i+4\ell'_i+6$.

\noindent{\bf\textsf{Fact 2.}}  There is no edge between   $V(F_i)$ and $V(F_j)$ if    $i\neq j$.

\noindent{\bf\textsf{Fact 3.}} There exists  at most one vertex $x$ such that $N_G(x)$  intersects   both   $V(F_i)$ and $V(F_j)$ whenever $i\neq j$.

We consider  an auxiliary graph $G'$ obtained from $G$ as follows:  For every integer  $i$ and every  element  $\{a,b\}\in\mathscr{A}_i$,   join  $a$ to all vertices in $N_G(b)\setminus N_G(a)$ and     $b$ to all vertices in $N_G(a)\setminus N_G(b)$.  We claim that $\widehat{G}=G'$.   Since any  pair in  $\mathscr{P}=\mathsmaller{\bigcup}_{i\geqslant0}\mathscr{A}_i$ is an  independent set  in  $G'$ by Fact 1,  the claim   concludes that $G$ does not percolate in  $K_{2,4}$-bootstrap process.

In order to prove the claim, it is enough to show  that  there is no pair     $\{x, y\}\notin\mathscr{P}$ with  $|N_{G'}(x)\cap N_{G'}(y)|\geqslant3$. Towards a  contradiction, suppose that there exists such a  pair   $\{x, y\}$. Let $S_1=\{x, y\}$ and  fix a subset  $S_2\subseteq N_{G'}(x)\cap N_{G'}(y)$ such that  $|S_2|\in\{3, 4\}$ and $|P\cap S_2|\in\{0, 2\}$ for each $P\in\mathscr{P}$.  Put  $S=S_1\cup S_2$.  By  Facts 2 and 3,  $V(F_i)\cap S=\varnothing$ for all $i$ except one, say $i_0$. We drop the subscript $i_0$ from  $F_{i_0}, \mathscr{A}_{i_0}, \mathscr{B}_{i_0}, \ell_{i_0}, \ell'_{i_0}$ in what follows.

First  we assume that $S\setminus V(F)\neq\varnothing$. Set $\alpha=|S_1\setminus V(F)|$, $\beta=|S_2 \setminus V(F)|$,   $\gamma=|S_2\cap\mathscr{B_1}|$  and    $\delta=|\{P\in\mathscr{A} \, |  \, \,   |P\cap S_2|=2\}|$. Clearly, $\beta+\gamma+2\delta=|S_2|$. Letting   $Z=G[S\cup V(F)]$, we   have   $|V(Z)|=\alpha+\beta+2\ell+3\ell'+5$ and $|E(Z)|\geqslant\alpha\gamma+\alpha\delta+2\beta+3\ell+4\ell'+6$. It follows from $d(Z)<\tfrac{13}{10}$  that
\begin{equation}\label{ccc}7(\alpha+\beta-1)+10\alpha(\gamma+\delta-2)+4\ell+\ell'+2<0.\end{equation}
In view of $\alpha+\beta\geqslant1$, it follows from \eqref{ccc} that $\gamma+\delta\leqslant1$, or equivalently, $(\gamma, \delta)\in\{(0, 0), (0, 1), (1, 0)\}$. Since  $\alpha+\beta\leqslant4$ and $\beta+\gamma+2\delta=|S_2|$, one can easily deduce   from \eqref{ccc} that  $\beta=\delta=1$,  $\gamma=\ell=0$ and $\alpha\in\{1, 2\}$. Moreover, if $\alpha=1$,   then it follows from \eqref{ccc} that    $\ell'=0$ and hence  $|S_1\cap\mathscr{B}|=1$. Now,    in  both cases $\alpha=1$ and $\alpha=2$, the structure of $Z$   forces    $F$ to  be   updated  to $Z$  during   the  procedure,   a contradiction.

We next  assume that $S\subseteq V(F)$. From    our    procedure and Fact 1, we observe  that $N_F(v)\in\mathscr{A}$ for any $v\in\mathscr{B}$.  This yields     $S\cap\mathscr{B}=\varnothing$. Hence,     there are $A_1, A_2, A_3, A_4\in\mathscr{A}$ such that $x\in A_1$, $y\in A_2$ and $S_2=A_3\cup A_4$. Note that
there exist  two edges between  $P$ and $Q$ for any $(P, Q)\in\{(A_1, A_3), (A_1, A_4), (A_2, A_3), (A_2, A_4)\}$. According to the procedure, each  $X\in\mathscr{A}$ is connected to exactly one of the elements of $\mathscr{A}$ generated prior
to $X$. This property  contradicts   the cyclic connection  between    $A_1,A_2,A_3,A_4$.

We have established  the claim   and so  the theorem is concluded.
\end{proof}

\begin{remark}
An easy but weak upper bound on $p_c(n; K_{2,t})$ can be found as follows. 
If a graph $G$ has  a copy of $\mathpzc{G}_{t-1}(u; u_1, \ldots, u_{t-2})$ as a subgraph, then one can easily see that a copy of $K_{t-1,t-1}$ is contained in $\widehat{G}$.
Therefore,   a threshold for the existence of    $\mathpzc{G}_{t-1}(u; u_1, \ldots, u_{t-2})$ in   $\mathbbmsl{G}(n, p)$  gives an upper bound on 
$p_c(n; K_{2,t})$.  This shows that   $p_c(n; K_{2,t})=\mathrm{O}(n^{-(t-1)/(2t-4)})$ using   Theorem \ref{bolo}.
\end{remark}

\section{The  lower  bound}

In this     section,     we give   a lower  bound on  $p_c(n; K_{2,t})$. In  \cite[Proposition\,25]{bal},  Balogh, Bollob\'as and Morris provided    a lower  bound   on  $p_c(n; H)$ for any $H$. According to  their result,  $p_c(n; K_{2,t})=\Omega(n^{-(t+1)/(2t-2)})$.  An improvement is given    in the following theorem.

\begin{thm}\label{lower}
For any fixed integer $t\geqslant4$, $$p_c(n; K_{2,t})=\Omega\hspace{-1mm}\left(n^{-\tfrac{t}{2t-3}}\right).$$
\end{thm}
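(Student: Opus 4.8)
The plan is to follow the scheme of the proof of Theorem~\ref{1310}, with the coarser density $\rho:=\tfrac{2t-3}{t}$ (so that $n^{-1/\rho}=n^{-t/(2t-3)}$) replacing $\tfrac{13}{10}$. First I reduce to a deterministic statement. If $G\thicksim\mathbbmsl{G}(n,p)$ with $p\ll n^{-t/(2t-3)}$, then for every fixed graph $H$ with $m(H)\geqslant\rho$ one has $p\ll n^{-1/m(H)}$, so by Theorem~\ref{bolo} a copy of $H$ is absent with high probability; as there are, for any fixed constant $C$, only finitely many isomorphism types of graphs on at most $C$ vertices, a union bound gives that with high probability $G$ contains no subgraph on at most $C$ vertices of maximum subgraph density at least $\rho$. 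So it suffices to prove the following: there is a constant $C(t)$ such that if $G$ has no subgraph on at most $C(t)$ vertices whose maximum subgraph density is at least $\rho$, then $\widehat G$ is not complete.

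For the deterministic statement, recall from Lemma~\ref{nei} that the bootstrap process is driven by \emph{merges}: a pair $\{a,b\}$ with $|N_G(a)\cap N_G(b)|\geqslant t-1$ spans a copy of $K_{2,t-1}$ in $G$ and makes $a$ and $b$ into twins; such a pair is necessarily a non-edge of $G$, since otherwise that $K_{2,t-1}$ together with the edge $ab$ is a subgraph on $t+1$ vertices with $2t-1$ edges, and $\tfrac{2t-1}{t+1}>\rho$ for $t\geqslant4$. I would form $G'$ from $G$ by performing, simultaneously with respect to the original neighbourhoods, every merge available in $G$: for each mergeable pair $\{a,b\}$, add all edges from $a$ to $N_G(b)$ and from $b$ to $N_G(a)$, but not the edge $ab$. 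The inclusion $G'\subseteq\widehat G$ is routine, because the edges of $G'\setminus G$ can be added one at a time — inserting an edge $aw$ with $w\in N_G(b)$ for a mergeable pair $\{a,b\}$ completes the new copy of $K_{2,t}$ on $\{a,b\}$ and $\{w\}\cup\big(N_G(a)\cap N_G(b)\big)$. Moreover $G'$ is not complete: if $G$ has no mergeable pair, then no edge can ever be added, because the first edge added joins the two parts of the created copy of $K_{2,t}$ and hence makes its part of size $2$ a mergeable pair of $G$, a contradiction; and if $\{x,y\}$ is a mergeable pair, then it stays a non-edge of $G'$, for otherwise the edge $xy$ is added because of a second mergeable pair $\{x,d\}$ with $d\in N_G(y)$, and then the two copies of $K_{2,t-1}$ together with the edge $dy$ form a subgraph on at most $2t+1$ vertices and at least $4t-3$ edges (fewer vertices under coincidences, which only raises the density), which is impossible since $\tfrac{4t-3}{2t+1}>\rho$.

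Thus everything comes down to the reverse inclusion $\widehat G\subseteq G'$, that is, to showing that $G'$ has no non-edge $\{x,y\}$ with $t-1$ common neighbours. Suppose such a pair existed, with common neighbours $z_1,\dots,z_{t-1}$ in $G'$. Since $\{x,y\}$ is not already mergeable in $G$, some $z_k$ is \emph{new}: one of the edges $xz_k$, $yz_k$ lies in $G'\setminus G$, and each such new edge, say $xz_k\notin E(G)$, is produced by a merge of a pair $\{x,d_k\}$ with $z_k\in N_G(d_k)$, which brings in another copy of $K_{2,t-1}$ of $G$ together with the edge $z_kd_k$. Assembling $x$, $y$, the $z_i$ and these $j\geqslant1$ auxiliary copies of $K_{2,t-1}$ yields a subgraph of $G$ on at most $t+1+jt$ vertices; after accounting for the possible coincidences — a $z_k$ new on both sides, or a $d_k$ or an apex of an auxiliary copy coinciding with some $z_i$ or with one another — it has at least $2t-2+2j(t-1)$ edges, and since $t\big(2t-2+2j(t-1)\big)-(2t-3)(t+1+jt)=t(j-1)+3>0$ for $j\geqslant1$, this subgraph has density at least $\rho$ on boundedly many vertices, contradicting the hypothesis on $G$ (and fixing an admissible $C(t)$). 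I expect this last step to be the real obstacle: the conceptual content is small, but one must organise the several ways in which the $z_k$ and the edges $xz_k$, $yz_k$ can arise and check the edge count in each, exactly as in the corresponding case analysis of the proof of Theorem~\ref{1310}. The density $\rho=\tfrac{2t-3}{t}$ is the largest for which these counts stay strictly favourable, because $d(K_{2,t-1})=\tfrac{2t-2}{t+1}<\rho$ whereas each auxiliary copy of $K_{2,t-1}$ contributes $2t-1$ edges for only $t$ new vertices, with $\tfrac{2t-1}{t}>\rho$.
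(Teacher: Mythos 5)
Your proposal follows the same strategy as the paper's: reduce to a deterministic statement about graphs with no bounded subgraph of density at least $\tfrac{2t-3}{t}$, build a candidate closure $G'$ by performing the ``merges'' forced by pairs with $t-1$ common neighbours, and then show via a density count that $G'$ is stable and incomplete. The one genuine difference is in the construction of $G'$: the paper fixes a \emph{maximal family of vertex-disjoint copies of $K_{2,t-1}$} and only merges the size-2 parts of those copies, whereas you merge \emph{every} pair with $\geqslant t-1$ common neighbours in $G$ simultaneously. Under the density hypothesis these coincide: two mergeable pairs sharing a vertex (or two $K_{2,t-1}$'s sharing any vertex outside a common $2$-part) force a subgraph of density $>\tfrac{2t-3}{t}$, as your $\tfrac{4t-4}{2t+1}>\tfrac{2t-3}{t}$ computation shows, so in fact all mergeable pairs are pairwise vertex-disjoint and each merge comes from a single $K_{2,t-1}$-class. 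Your version is arguably cleaner conceptually, but be aware of two small imprecisions. First, the sentence ``$G'$ has no non-edge $\{x,y\}$ with $t-1$ common neighbours'' is literally false — the mergeable pairs themselves are non-edges of $G'$ with $\geqslant t-1$ common neighbours; the correct statement, which your subsequent argument actually proves, is that every pair with $\geqslant t-1$ common neighbours in $G'$ is one of the mergeable pairs (hence a twin pair), and this is also what lets you close the bootstrap stability argument for edges of $G'$, not only non-edges. Second, the final count is left as a sketch; the paper does it by introducing a handful of parameters ($\alpha,\beta,\gamma,\lambda,\mu,\nu$) and deriving one clean inequality, and your ``after accounting for possible coincidences'' step would need a comparable case check — although the key observation that density $>1$ makes any identification of vertices only increase density does make the remaining verification routine.
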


\begin{proof}
If $G\thicksim\mathbbmsl{G}(n, p)$ with  $p\ll n^{-{t}/(2t-3)}$, then  Theorems  \ref{bolo} together with     the union bound theorem   yield   that $G$ with high probability contains no  bounded subgraph $H$ with $m(H)\geqslant (2t-3)/t$. So,  in order to prove  the theorem, it suffices to  show  that any graph     with no     bounded subgraph $H$ satisfying   $m(H)\geqslant(2t-3)/t$    does not percolate  in  $K_{2,t}$-bootstrap process.

Fix a graph   $G$    without any   bounded subgraph $H$ with  $m(H)\geqslant(2t-3)/t$.
Consider  a maximal family   $\mathscr{F}=\{F_1, \ldots, F_\ell\}$ of vertex disjoint copies of $K_{2,t-1}$ in $G$. Denote the vertex bipartition of $F_i$ by   $\{a_{i1}, a_{i2}\}$ and   $\{b_{i1}, \ldots,  b_{i, t-1}\}$. Denote  by $G'$  the   graph obtained from $G$ by joining   $a_{i1}$ to all vertices in $N_G(a_{i2})\setminus N_G(a_{i1})$ and   $a_{i2}$ to all vertices in $N_G(a_{i1})\setminus N_G(a_{i2})$  for    $i=1, \ldots, \ell$.  We claim that  $\widehat{G}=G'$.  Since the graph obtained from $K_{2,t-1}$ by adding one edge has density $(2t-1)/(t+1))>(2t-3)/t$, our assumption on $G$ concludes that $G'$ is not a complete graph. So,    the claim yields  that $G$ does not percolate in  $K_{2,t}$-bootstrap process.

In order to prove  the claim,   it is sufficient   to show  that    there exists no pair $\{x, y\}\notin\{\{a_{11}, a_{12}\}, \ldots, \{a_{\ell1}, a_{\ell2}\}\}$ so that  $|N_{G'}(x) \cap N_{G'}(y)|\geqslant  t-1$. By   contrary, suppose that there exists such a  pair   $\{x, y\}$. Let $S_1=\{x, y\}$ and $p_i=|\{a_{i1},a_{i2}\}\cap S_1|$ for any $i$. By the  assumption, $p_i\in \{0,1\}$. Further, fix a subset  $S_2\subseteq N_{G'}(x)\cap N_{G'}(y)$ such that $|S_2|\in\{t-1, t\}$ and $q_i=|\{a_{i1},a_{i2}\}\cap S_2|\in\{0, 2\}$ for any $i$. Put  $S=S_1\cup S_2$ and  $k=|S|$.   Assume that
\begin{align*}
&\alpha=|\{i \, | \, p_i=1\}|, \\
&\beta=|\{i \, | \, q_i=2\}|, \\
&\gamma=\big|\big\{i \, \big| \,      p_i=q_i=0   \text{ and there exists $j$ with } b_{ij}\in S\big\}\big|,\\
&\lambda=\big|\big\{b_{ij} \, \big| \,    b_{ij}\in S_1 \text{ and } p_i=1\big\}\cup\big\{b_{ij} \, \big| \,   b_{ij}\in S_2 \text{ and } q_i=2\big\}\big|,\\
&\mu=\big|\big\{b_{ij} \, \big| \,     b_{ij}\in S_1 \text{ and } q_i=2\big\}\cup\big\{b_{ij} \, \big| \,  b_{ij}\in S_2 \text{ and } p_i=1\big\}\big|,\\
&\nu=\big|\big\{b_{ij} \, \big| \,      p_i=q_i=0   \text{ and } b_{ij}\in S\big\}\big|.
\end{align*}
Note that  $\alpha+\beta+\gamma\geqslant1$ and $\gamma\leqslant\nu$. Let $$H=G\Bigg[S\cup\hspace{-2mm}\bigcup_{S\cap V(F_i)\neq\varnothing}\hspace{-3mm}V(F_i)\Bigg].$$
It is easy to see that $$|V(H)|=(\alpha+\beta+\gamma)(t+1)+k-\alpha-2\beta-\lambda-\mu-\nu$$ and $$|E(H)|\geqslant 2(\alpha+\beta+\gamma)(t-1)+2(k-\beta-2)-\mu.$$
The condition $k\leqslant t+2$ implies that  $|V(H)|$  is bounded and so  $m(H)<(2t-3)/t$ by the assumption on $G$. It follows from $d(H)<(2t-3)/t$ that
$$t(\alpha+\beta-\gamma+2\lambda+\mu+2\nu-4)<3(\beta-\gamma+\lambda+\mu+\nu-k),$$ which  can be rewritten as
$$(t-3)\big((\alpha+\beta+\gamma-1)+\mu\big)+(2t-3)\big((\nu-\gamma)+\lambda\big)+3\big(\alpha+\gamma+\big(k-(t+1)\big)\big)<0.$$
We have reached a contradiction,   since   the left hand side of the inequality above  is nonnegative. This establishes  the claim, as required.
\end{proof}

\section{Concluding remarks}

In this paper, we have determined an upper bound for the  threshold  of   $K_{2,t}$-bootstrap percolation by proposing a subgraph whose existence forces  the graph to  percolate. Note  that if Question \ref{que} has an affirmative answer,  then    \cite[Theorem\,5.4]{fri} implies that  $K_{2,t}$-bootstrap percolation  has a  coarse threshold.    Question \ref{que} has been answered positively  in the case   $t=4$. We think  that    our  approach can be used to resolve     Question \ref{que}  for $t=5$. However, it    does not seem  promising  for $t\geqslant6$.

\end{document}